\documentclass[a4paper,12pt]{article}

\usepackage{amssymb}
\usepackage{amsthm}
\usepackage{amsmath}
\usepackage{graphicx}
\usepackage{array}
\usepackage{hhline}

\newtheorem{lemma}{Lemma}[section]
\newtheorem{theorem}[lemma]{Theorem}
\newtheorem{proposition}[lemma]{Proposition}

\theoremstyle{definition}
\newtheorem{definition}[lemma]{Definition}

\numberwithin{equation}{section}
\numberwithin{figure}{section}

\newcommand{\R}{\mathbb{R}}

\newcommand{\Z}{\mathbb{Z}}

\newcommand{\X}{\mathcal{X}}

\newcommand{\ie}{\emph{i.e.}}

\newcommand{\abs}[1]{\lvert#1\rvert}

\begin{document}

\title{Formula for Hermite multivariate interpolation and partial fraction decomposition}

\author{Hakop Hakopian \\
  \small Yerevan State University, Yerevan, Armenia \\
  \small Institute of Mathematics, Institute of Mathematics of NAS RA
}

\date{}
\maketitle

\begin{abstract}
We present a new formula for the Hermite multivariate interpolation problem in the framework of the Chung--Yao approach. By using the respective univariate interpolation formula, we obtain a direct and explicit solution to the classical partial fraction decomposition problem for rational functions, including the real case.
\end{abstract}

\textbf{Keywords:} multivariate Hermite interpolation, Chung--Yao interpolation, rational function, partial fraction.

\textbf{MSC 2010:} 41A05, 26C15

\section{Introduction}

\subsection{Chung--Yao Lagrange interpolation}

For $x = (x_1,\dots,x_k)$, $y = (y_1,\dots,y_k) \in \R^k$ and multi-index $\alpha = (\alpha_1,\dots,\alpha_k) \in \Z_{\ge 0}^k$, we adopt the standard multi-index notation:
\[
x \cdot y = \sum_{i=1}^k x_i y_i, \qquad
x^\alpha = \prod_{i=1}^k x_i^{\alpha_i}, \qquad
\abs{\alpha} = \sum_{i=1}^k \alpha_i, \qquad
\alpha! = \prod_{i=1}^k \alpha_i!.
\]

The space of polynomials of total degree at most $n$ in $k$ variables is
\[
\Pi_n^k = \Bigl\{ \sum_{\abs{\alpha} \le n} c_\alpha \, x^\alpha \Bigr\},
\qquad \dim\Pi_n^k = \binom{n+k}{k} =: N.
\]

Let $\mathcal L_m = \{L_1,\dots,L_m\}$ be a collection of $(k-1)$-dimensional hyperplanes in $\R^k$.

 Denote by $\mathbb{I}_k^m$ the set of all strictly increasing $k$-tuples from $\{1,\dots,m\}$:
\[
\alpha = (\alpha_1,\dots,\alpha_k) \in \mathbb{I}_k^m \iff 1 \le \alpha_1 < \cdots < \alpha_k \le m.
\]

\begin{definition}
The family $\mathcal L_m$ is in \emph{general position} if
\begin{enumerate}
\item the intersection of any $k$ distinct hyperplanes is a single point,
\item the intersection of any $k+1$ distinct hyperplanes is empty.
\end{enumerate}
If only condition (i) holds, we say $\mathcal L_m$ is \emph{admissible}.
\end{definition}

The intersection points are denoted
\[
x_\alpha := L_{\alpha_1} \cap \cdots \cap L_{\alpha_k}, \qquad \alpha \in \mathbb{I}_k^m.
\]

Note that condition (ii) means that all points $x_\alpha$ are distinct.

Assume now that $\mathcal L_{n+k} = \{L_1,\dots,L_{n+k}\}$ is in general position. Then there are exactly $N = \binom{n+k}{k}$ distinct intersection points. To simplify notation, we assume that the hyperplane $L_i,$ is given by a linear equation $L_i(x)=0,$ \ie  \ $L_i\in\Pi_1^k.$

\begin{theorem}[Chung--Yao \cite{ChungYao}]\label{thm:ChungYao}
For any data $\{c_\alpha : \alpha \in \mathbb{I}_k^{n+k}\}$ there exists a unique $p \in \Pi_n^k$ such that
\begin{equation}\label{eq:ChY}
p(x_\alpha) = c_\alpha \quad \forall\, \alpha \in \mathbb{I}_k^{n+k}.
\end{equation}
\end{theorem}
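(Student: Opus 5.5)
Since the number of conditions in \eqref{eq:ChY} equals $N=\dim\Pi_n^k$, existence and uniqueness are equivalent. So it suffices to exhibit, for each $\beta\in\mathbb{I}_k^{n+k}$, a fundamental polynomial $p_\beta\in\Pi_n^k$ with $p_\beta(x_\alpha)=\delta_{\alpha\beta}$. Then $p=\sum_\beta c_\beta p_\beta$ solves the problem, the map from $\Pi_n^k$ to the data vector $(p(x_\alpha))_\alpha\in\R^N$ is onto, and hence it is injective by dimension count, which gives uniqueness.

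For fixed $\beta=(\beta_1,\dots,\beta_k)$ I would take
\[
p_\beta(x)=\prod_{i\notin\{\beta_1,\dots,\beta_k\}}\frac{L_i(x)}{L_i(x_\beta)}.
\]
The product has $(n+k)-k=n$ linear factors, so $p_\beta\in\Pi_n^k$. The denominators are nonzero: if $L_i(x_\beta)=0$ for some $i\notin\beta$, then the $k+1$ hyperplanes $L_{\beta_1},\dots,L_{\beta_k},L_i$ would share the point $x_\beta$, contradicting condition (ii) of general position. Clearly $p_\beta(x_\beta)=1$.

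The step needing the most care is showing $p_\beta(x_\alpha)=0$ for $\alpha\ne\beta$. Since $\alpha$ and $\beta$ are distinct $k$-subsets, some index $\alpha_j$ does not belong to $\beta$. Then $L_{\alpha_j}$ is one of the factors of $p_\beta$, and $L_{\alpha_j}(x_\alpha)=0$ because $x_\alpha$ lies on every $L_{\alpha_i}$. Hence $p_\beta(x_\alpha)=0$, completing the argument. Condition (i) is used only to make each point $x_\alpha$ well defined.
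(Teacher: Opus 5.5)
Your proof is correct and follows the paper's approach: your $p_\beta$ is exactly the paper's fundamental polynomial $p_\beta^\star$ with normalizing constant $A_\beta=\prod_{i\notin\beta}L_i(x_\beta)$, and the interpolant is then given by the Lagrange formula. You also fill in two points the paper leaves implicit: $A_\beta\neq 0$ follows from condition (ii), and uniqueness follows from the count $\#\mathbb{I}_k^{n+k}=N=\dim\Pi_n^k$.
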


Note that the fundamental polynomial of  $x_\alpha$ is
\[
p_\alpha^\star(x) = \frac{1}{A_\alpha} \prod_{\substack{i=1 \\ i \notin \alpha}}^{n+k} L_i(x),
\]
where $A_\alpha$ is the normalizing constant so that $p_\alpha^\star(x_\alpha) = 1$. 

Then the Lagrange formula gives the polynomial satisfying \eqref{eq:ChY}:
\[
p(x) = \sum_{\alpha \in \mathbb{I}_k^{n+k}} c_\alpha \, p_\alpha^\star(x).
\]

\subsection{Hermite interpolation}

Now assume that $\mathcal L_{n+k}$ is admissible only. Let
\[
\X = \{x^{(1)},\dots,x^{(s)}\}
\]
be the set of all distinct intersection points of the hyperplanes of $\mathcal L_{n+k}$. 

Define \emph{multiplicity} of $x^{(i)}$ as
\[
m_i = \#\{j : x^{(i)} \in L_j,\ 1 \le j \le n+k\} - k + 1.
\]
Denote for $\alpha=(\alpha_1,\ldots,\alpha_k)\in \mathbb Z_{\ge 0}^k$ 
$$D^\alpha f= {{\partial^{|\alpha|}}\over{\partial {x_1}^{\alpha_1}\cdots\partial {x_k}^{\alpha_k}}}f.$$

The Hermite interpolation data consist of all partial derivatives up to total order $m_i-1$ at each point $x^{(i)}$. 

We say a point  $x^{(i)}$ is \emph{simple} if its multiplicity equals to $1$. Note that at simple points only the value of a polynomial is interpolated.

As it turns out (see \cite{Hak3}) the number of interpolation conditions in the case of  admissible set of hyperplanes $\mathcal L_{n+k}$ again equals to $N$ and the corresponding Hermite multivariate interpolation problem is unisolvent.

Below we present the Hermite multivariate  polynomial ionterpolation in the framework of the Chung--Yao approach.

\begin{theorem}[\cite{Hak3}]\label{thm:Hermite}
For any data $\{c_i^\alpha : 1 \le i \le s,\ \abs{\alpha} \le m_i-1\}$ there exists a unique $p \in \Pi_n^k$ satisfying
\begin{equation}\label{eq:Hermite}
D^\alpha p(x^{(i)}) = c_i^\alpha \quad \forall\, 1 \le i \le s,\ \forall\, \abs{\alpha} \le m_i-1.
\end{equation}
\end{theorem}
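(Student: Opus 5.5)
We reduce the statement to showing that the homogeneous problem has only the trivial solution. The first step is to count the conditions. At $x^{(i)}$ the number of conditions is $\binom{m_i-1+k}{k}$, the number of $\alpha$ with $\abs{\alpha}\le m_i-1$. If $x^{(i)}$ lies on $r_i=m_i+k-1$ hyperplanes, then, since any $k$ of them meet only at $x^{(i)}$, it accounts for exactly $\binom{r_i}{k}=\binom{m_i-1+k}{k}$ of the $k$-subsets of $\{1,\dots,n+k\}$. Every $k$-subset determines exactly one point by admissibility, so summing over $i$ gives $\sum_i\binom{m_i-1+k}{k}=\binom{n+k}{k}=N$. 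The problem is therefore square, and it suffices to prove that if $p\in\Pi_n^k$ satisfies \eqref{eq:Hermite} with all $c_i^\alpha=0$, then $p\equiv0$.

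We prove this by induction on $k+n$. The base cases are $n=0$ (one point, one value condition, and $p$ a constant) and $k=1$. For $k=1$ the point $x^{(i)}$ is a root repeated among the $L_j$ with multiplicity $m_i$, so the conditions say $p$ has zeros of total multiplicity $\sum m_i=n+1$, which forces $p\equiv0$.

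For the induction step, restrict $p$ to the hyperplane $L_{n+k}$, identified with $\R^{k-1}$ by an affine change of coordinates. The traces $L_j\cap L_{n+k}$, $j<n+k$, form $n+k-1=n+(k-1)$ hyperplanes of $L_{n+k}$. They form an admissible family, since any $k-1$ of them together with $L_{n+k}$ meet in a single point. Their intersection points are exactly the points of $\X$ lying on $L_{n+k}$. If such a point lies on $r$ of the original hyperplanes, one of which is $L_{n+k}$, then its multiplicity in the restricted configuration is $(r-1)-(k-1)+1=m_i$, the same as before. Moreover, vanishing of all derivatives of order $\le m_i-1$ of $p$ at $x^{(i)}$ implies vanishing of all tangential derivatives of the same orders of $p|_{L_{n+k}}$. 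By the induction hypothesis in dimension $k-1$ and degree $n$, $p|_{L_{n+k}}\equiv0$, so $p=L_{n+k}\,q$ with $q\in\Pi_{n-1}^k$.

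We then show that $q$ solves the homogeneous problem for $\mathcal L'=\{L_1,\dots,L_{n+k-1}\}$, which is admissible with degree parameter $n-1$. The points of $\X$ not on $L_{n+k}$ keep their multiplicity, and there $L_{n+k}\neq0$. Writing $q=p/L_{n+k}$ near such a point and using the Leibniz rule, $q$ vanishes to order $m_i$ there. For a point on $L_{n+k}$ that lies on some other $k$ hyperplanes of $\mathcal L'$, its multiplicity in $\mathcal L'$ drops to $m_i-1$. Points on $L_{n+k}$ that do not lie on $k$ hyperplanes of $\mathcal L'$ disappear from the new configuration. These are exactly the points with $m_i=1$, and $m_i-1=0$ imposes nothing. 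The needed fact is: if $p=\ell q$ vanishes to order $m$ at a point $a$ with $\ell(a)=0$ and $\ell$ linear and nonzero, then $q$ vanishes to order $m-1$ at $a$. This follows by expanding in the homogeneous components about $a$: the lowest nonzero component of $q$ of degree $d$ gives a nonzero component $\ell\cdot q_d$ of $p$ of degree $d+1$, so $d+1\ge m$. By induction $q\equiv0$, hence $p\equiv0$. Existence and uniqueness then follow from the dimension count.

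The main obstacle is the bookkeeping in the last two steps. One has to check that the multiplicities transform exactly as claimed under restriction and under deletion of $L_{n+k}$, and that admissibility is preserved, so that the induction hypothesis applies with precisely the right conditions. This is where the definition $m_i=\#\{j\}-k+1$ must be matched carefully in each configuration.
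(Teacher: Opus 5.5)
Your proof is correct. It takes a different route from the one the paper makes available. The paper does not prove this theorem: it quotes both the count $\sum_i\binom{m_i+k-1}{k}=N$ and unisolvence from \cite{Hak3}.

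What the paper does supply is the Lagrange--Taylor formula, and its proof gives the \emph{opposite} half of unisolvence, namely existence. For arbitrary data $c_i^\alpha$, the jet of $f/\phi_i$ at $x^{(i)}$ up to order $m_i-1$ is determined by the data, because $\phi_i(x^{(i)})\neq 0$. Hence $\sum_i\phi_i\,\mathcal T_i$ is defined directly from the data. Each term has degree at most $(n-m_i+1)+(m_i-1)=n$, since exactly $m_i+k-1$ hyperplanes pass through $x^{(i)}$; the paper leaves this degree bound implicit. Each term also vanishes to order $m_r$ at every other $x^{(r)}$ and reproduces the data at $x^{(i)}$. So existence is explicit, and uniqueness follows from the count.

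You instead prove injectivity by a Chung--Yao-style induction: restrict to $L_{n+k}$, factor $p=L_{n+k}q$, pass to $\mathcal L'$ with degree $n-1$, and then get existence from the count. The paper's route is shorter and constructive. The only geometric input it needs is that two distinct points of $\X$ share at most $k-1$ hyperplanes. Your route is non-constructive but self-contained and does not require the interpolant to be guessed. Your multiplicity bookkeeping is right: multiplicities are preserved under restriction, drop from $m_i$ to $m_i-1$ on $L_{n+k}$, and simple points disappear. So is your lemma on dividing by a linear factor.

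One small point you leave implicit: each trace $L_j\cap L_{n+k}$ must be a genuine hyperplane of $L_{n+k}$, i.e.\ $L_j\neq L_{n+k}$ and $L_j\cap L_{n+k}\neq\emptyset$. Both follow from admissibility applied to any $k$-subset containing $j$ and $n+k$, which exists because $k\ge 2$ in the induction step.
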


Next we discuss the problem of finding the polynomial satisfying the conditions \eqref{eq:Hermite}.

\subsection{New Hermite multivariate interpolation formula}

Let $f$ be sufficiently smooth. The Taylor polynomial of total degree $m$ for $f$ at $c \in \R^k$ is
\[
\mathcal{T}_{f,c,m}(x) = \sum_{\abs{\alpha} \le m} \frac{D^\alpha f(c)}{\alpha!} (x-c)^\alpha.
\]

It satisfies
\begin{equation}\label{eq:Taylor}
D^\alpha \mathcal{T}_{f,c,m}(c) = D^\alpha f(c) \quad \forall\, \abs{\alpha} \le m.
\end{equation}

Define the global vanishing polynomial
\[
\phi(x) = \prod_{j=1}^{n+k} L_j(x)
\]
and the corresponding polynomial vanishing outside the point $x^{(i)}$
\[
\phi_i(x) = \prod_{\substack{j=1 \\ x^{(i)} \notin L_j}}^{n+k} L_j(x).
\]

Let $p_f \in \Pi_n^k$ be the unique Hermite interpolant of $f$, \ie
\[
D^\alpha p_f(x^{(i)}) = D^\alpha f(x^{(i)}) \quad \forall\, 1 \le i \le s,\ \forall\, \abs{\alpha} \le m_i-1.
\]

\begin{proposition}
Let $\mathcal L_{n+k}$ be admissible. Then the following explicit  formula holds:
\[        
p_f(x) = \sum_{i=1}^s \phi_i(x) \cdot \mathcal{T}_{f/\phi_i,\, x^{(i)},\, m_i-1}(x).
\]
\end{proposition}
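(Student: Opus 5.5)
By the uniqueness in Theorem~\ref{thm:Hermite}, it is enough to show two things about the right-hand side $q(x)=\sum_{i=1}^s \phi_i(x)\,\mathcal{T}_{f/\phi_i,\,x^{(i)},\,m_i-1}(x)$: that it lies in $\Pi_n^k$, and that it satisfies the conditions \eqref{eq:Hermite} with $c_i^\alpha=D^\alpha f(x^{(i)})$.

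\emph{Degree.} Let $r_i=\#\{j: x^{(i)}\in L_j\}$, so $m_i=r_i-k+1$. Since $\phi_i$ is a product of $n+k-r_i$ linear factors, $\deg\phi_i=n+k-r_i$. The Taylor polynomial has degree at most $m_i-1=r_i-k$. Hence each summand has degree at most $n+k-r_i+r_i-k=n$, and $q\in\Pi_n^k$.

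\emph{Interpolation at a fixed point.} Fix $x^{(i)}$. The quotient $f/\phi_i$ is smooth near $x^{(i)}$ because $\phi_i(x^{(i)})\neq 0$. The argument splits into the diagonal term and the off-diagonal terms.
\begin{itemize}
\item \emph{Diagonal term.} Write $g=f/\phi_i$ and $T=\mathcal{T}_{g,x^{(i)},m_i-1}$. By \eqref{eq:Taylor}, $D^\beta(T-g)(x^{(i)})=0$ for $\abs{\beta}\le m_i-1$. By the Leibniz rule, $D^\alpha\bigl(\phi_i(T-g)\bigr)(x^{(i)})=0$ for $\abs{\alpha}\le m_i-1$. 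Since $\phi_i g=f$, we get $D^\alpha(\phi_iT)(x^{(i)})=D^\alpha f(x^{(i)})$.
\item \emph{Off-diagonal terms.} For $l\neq i$, I claim that $\phi_l$ vanishes to order at least $m_i$ at $x^{(i)}$; that is, all derivatives of order $\le m_i-1$ vanish there. Then the Leibniz rule again kills all the required derivatives of $\phi_l\,\mathcal{T}_{f/\phi_l,x^{(l)},m_l-1}$ at $x^{(i)}$.
\end{itemize}

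\emph{The key claim.} The factors of $\phi_l$ are the hyperplanes not passing through $x^{(l)}$. Among these, I count those passing through $x^{(i)}$. Each such factor vanishes at $x^{(i)}$, so if there are $t$ of them, $\phi_l$ vanishes to order $t$ at $x^{(i)}$. I need $t\ge m_i=r_i-k+1$, or equivalently: at most $k-1$ of the $r_i$ hyperplanes through $x^{(i)}$ also pass through $x^{(l)}$.

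This follows from admissibility. Suppose $k$ distinct hyperplanes contained both $x^{(i)}$ and $x^{(l)}$. Their intersection would then contain two distinct points, contradicting condition (i), which says the intersection of any $k$ distinct hyperplanes is a single point. Hence $t\ge r_i-(k-1)=m_i$, as needed.

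Combining the two cases gives $D^\alpha q(x^{(i)})=D^\alpha f(x^{(i)})$ for all $\abs{\alpha}\le m_i-1$. By uniqueness, $q=p_f$. The counting step in the key claim is the only nontrivial point; everything else is Leibniz-rule bookkeeping.
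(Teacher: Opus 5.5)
Your proof is correct and follows the paper's argument: the same admissibility count showing that at least $m_i$ linear factors of $\phi_l$ vanish at $x^{(i)}$ for $l\neq i$, and the same Leibniz-rule argument for reproducing the derivatives at the diagonal point. You also check explicitly that each summand has degree at most $n$, a step the paper leaves implicit when it appeals to uniqueness.
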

Let us call this Lagrange--Taylor formula.
 
\begin{proof}
It suffices to show that each fixed term
\[
p_i(x) := \phi_i(x) \cdot\mathcal T_i, \ \hbox{where}\ \mathcal T_i:= \mathcal{T}_{f/\phi_i,\, x^{(i)},\, m_i-1}(x)
\]
satisfies the following two groups of conditions:

\textbf{1. Vanishing at other points $x^{(r)}$, $r \neq i,$ up to total order $m_r-1$:}  
Indeed, exactly $m_r+k-1$ hyperplanes from $\mathcal L$ pass through $x^{(r)}$. At most $k-1$ of them can also pass through $x^{(i)}$ (since otherwise $x^{(r)} = x^{(i)}$). Therefore  at least $m_r$ linear factors of $\phi_i$ vanish at $x^{(r)}$. So all derivatives of $p_i$ up to order $m_r-1$ vanish at $x^{(r)}$.

\textbf{2. Correct reproduction at $x^{(i)}$ up to total order $m_i-1$:}  

By the multivariate Leibniz rule,
 \begin{equation}\label{eq:ma}
D^\alpha p_i(x^{(i)}) = \sum_{\beta \le \alpha} \binom{\alpha}{\beta} \bigl(D^\beta \phi_i\bigr)(x^{(i)}) \cdot \bigl(D^{\alpha-\beta} \mathcal{T}_i\bigr)(x^{(i)}).
\end{equation}

Since $\abs{\alpha} \le m_i-1$ and $\mathcal{T}_i$ reproduces all derivatives of $f/\phi_i$ up to order $m_i-1$ at $x^{(i)}$, we have
\[
D^{\alpha-\beta} \mathcal{T}_i(x^{(i)}) = D^{\alpha-\beta} \Bigl( \frac{f}{\phi_i} \Bigr)(x^{(i)})
\]
for every term in the sum \eqref{eq:ma}. Therefore
\[
D^\alpha p_i(x^{(i)}) = D^\alpha \Bigl( \phi_i \cdot \frac{f}{\phi_i} \Bigr)(x^{(i)}) = D^\alpha f(x^{(i)}).
\]

This completes the proof.
\end{proof}

In the next section we discuss an application of the Lagrange--Taylor formula to the univariate case.

\section{An application: partial fraction decomposition} 
\subsection{Preliminaries and the simple roots case}
Let $\pi$ and $\pi_n$ denote the spaces of all univariate polynomials and univariate polynomials of degree at most $n$, respectively.

Consider a rational function
$$R(x)={p(x)\over q(x)}, \ \hbox{where}\ p,q\in\pi,\ q\neq 0.$$
Here we assume that $\deg p=m,\ \deg q=n+1.$
 
$R$ is called an improper or proper rational function if $m\ge n+1$ or $m\le n,$ respectively. 

Any improper rational function can be decomposed as a sum of a polynomial and a proper rational function:

\begin{equation} \label{pqrsb}
{{p(x)}\over {q(x)}} = s(x) + {{r(x)}\over{q(x)}},
\end{equation}
where $s \in \pi$, $\deg s = m - (n+1)$, and $r \in \pi_n$ is the remainder of polynomial division:
\begin{equation}\label{pqrs}p(x)=s(x) q(x)+ r(x).
\end{equation}
First consider the well-known simplest case: the roots of $q$ are distinct complex numbers: $x_0, x_1,\ldots, x_n.$  Assume without loss of generality that $q$ is monic, so it can be factored as
$$q(x)=(x-x_0)\cdots (x-x_n).$$
Note that
\begin{equation}\label{qAx}q'(x_i)=\prod_{\substack{j=0 \\ j \neq i}}^n (x_i-x_j).\end{equation}

Clearly we obtain from \eqref{pqrs} that $r(x_i) = p(x_i)$ for each $i=0,\ldots,n.$

By the Lagrange formula,
\[
r(x) = \sum_{i=0}^n p(x_i) \prod_{\substack{j=0 \\ j \neq i}}^n \frac{x - x_j}{x_i - x_j}.
\]
Dividing by $q(x)$ and using \eqref{qAx} yields the well-known partial fraction form for distinct roots:
\[
\frac{r(x)}{q(x)} = \sum_{i=0}^n \frac{p(x_i)}{q'(x_i)} \frac{1}{x - x_i},
\]
and therefore
\[
\frac{p(x)}{q(x)} = s(x) + \sum_{i=0}^n \frac{c_i}{x - x_i}, \qquad c_i = \frac{p(x_i)}{q'(x_i)}.
\]

\subsection{{Two univariate interpolation formulas}}
Now assume that the roots of the denominator $q$ are multiple presented in the following form:
\begin{equation} \label{mset}\{t_0,\ldots,t_n\}=\{\underbrace{d_1,\ldots,d_1}_\text{$m_1$};\ldots; \underbrace{d_s,\ldots, d_s}_\text{$m_s$}\}, \end{equation}
where $D=\{d_1,\ldots,d_s\}$ is the set of distinct roots and $m=\{m_1,\ldots,m_s\}$ is the set of multiplicities,
$m_1+\cdots+m_s=n+1.$

Thus  $q$ has the following expansion:
$$q(x)=(x-t_0)\cdots (x-t_n)=(x-d_1)^{m_1}\cdots (x-d_s)^{m_s}.$$
The classical partial fraction decomposition theorem states that there exist a polynomial $s$ and constants $c_{ij}$ ($i=1,\dots,s$, $j=1,\dots,m_i$) such that
\[
\frac{p(x)}{q(x)} = s(x) + \sum_{i=1}^s \sum_{j=1}^{m_i} \frac{c_{ij}}{(x - d_i)^j}.
\]

To determine the constants $c_{ij}$, there is a long procedure, for example by reducing the problem to solving a linear system.

We will present below a direct and explicit solution to this problem, using the univariate analogue of the Lagrange-Taylor formula.

To also study the case of real rational functions, we will start with another similar univariate formula.
To this end, let us begin with the case of distinct roots and their distribution into different groups:
\begin{equation} \label{mset0}\{t_0,\ldots, t_n\}=\{\underbrace{d_{10},\ldots,d_{1m_1-1}}_\text{the first group};\ldots; \underbrace{d_{s0},\ldots, d_{sm_s-1}}_\text{the $s$-th group}\}. \end{equation}
Indeed, we have that
$$m_1+\cdots+m_s=n+1.$$
Denote
$$\psi(x):=\prod_{i=1}^s\prod_{j=0}^{m_i-1}(x-d_{ij})=\prod_{i=1}^s(x-d_{i0})\cdots(x-d_{im_i-1}),$$
$$\psi_j(x):={{\psi(x)}\over {(x-d_{j 0})\cdots(x-d_{j m_j-1})}}=\prod_{\substack{i=1 \\ i \neq j}}^s(x-d_{i0})\cdots(x-d_{im_i-1}).$$
Note that $\psi_j(d_{jk)}\neq 0\ \forall k=0,\ldots,m_j-1.$

The following grouped Lagrange interpolation formula holds:
\begin{equation}\label{eq:LagNewton}
\mathcal{P}_{f; t_0,\dots,t_n}(x) = \sum_{i=1}^s \psi_i(x) \cdot \mathcal{P}_{f/\psi_i;\ d_{i0},\dots,d_{i,m_i-1}}(x),
\end{equation}
where $\mathcal{P}$ denotes the unique interpolating polynomial of degree $\le n$.

Indeed, the right-hand side is a polynomial of degree at most $n$ (since $\deg \psi_i = n+1 - m_i$ and the local interpolant has degree $\le m_i-1$), and it matches $f\over{\psi_i}$ at every node $d_{ij}$ ($j=0,\dots,m_i-1$).

When all $m_i = 1$, formula \eqref{eq:LagNewton} reduces to the classical Lagrange formula.

Expressing the local  interpolants of  \eqref{eq:LagNewton} via the Newton form we obtain:
\begin{equation} \label{L-N}{\mathcal P}_{f,t_0,\ldots,t_n}=\sum_{i=1}^s \psi_i(x)\sum_{j=0}^{m_i-1}(x-d_{i0})\cdots(x-d_{ij-1})[d_{i0},\cdots, d_{ij}]{f\over {\psi_i}}.
\end{equation} 
This formula will be used in the real decomposition case.

Note that coalescing the nodes in each group ($d_{ij} \to d_i$) transforms \eqref{L-N} into the univariate Lagrange--Taylor formula:
\begin{equation}\label{eq:LagTaylor}
\mathcal{P}_{f;\ t_0,\dots,t_n}(x) = \sum_{i=1}^s q_i(x) \sum_{j=0}^{m_i-1} \frac{1}{j!} \left( \frac{f}{q_i} \right)^{(j)}(d_i) (x - d_i)^j,
\end{equation}
where
$$q_i(x)={q(x)\over (x-d_i)^{m_i}},\ \  q(x)=(x-d_1)^{m_1}\cdots (x-d_k)^{m_k}.$$

\subsection{The decomposition of rational functions in the general case}

Applying \eqref{eq:LagTaylor} to the remainder polynomial $f=r \in \pi_n$ (which interpolates itself) gives
\[
r(x) = \sum_{i=1}^s q_i(x) \sum_{j=0}^{m_i-1} \frac{1}{j!} \left( \frac{r}{q_i} \right)^{(j)}(d_i) (x - d_i)^j.
\]
Dividing by $q(x)$ yields
\[
\frac{r(x)}{q(x)} = \sum_{i=1}^s \sum_{j=0}^{m_i-1} \frac{1}{j!} \left( \frac{r}{q_i} \right)^{(j)}(d_i) \frac{1}{(x - d_i)^{m_i-j}}.
\]
Combining with the polynomial part $s(x)$ we obtain the explicit partial fraction decomposition:
\[
\frac{p(x)}{q(x)} = s(x) + \sum_{i=1}^s \sum_{j=0}^{m_i-1} \frac{c_{ij}}{(x - d_i)^{m_i-j}},
\]
where
\[
c_{ij} = \frac{1}{j!} \left( \frac{r}{q_i} \right)^{(j)}(d_i) = \frac{1}{j!} \left( \frac{p}{q_i} \right)^{(j)}(d_i).
\]
The last equality above follows from the relation ${\mathcal P}_{p;t_0,\ldots,t_n}=r,$ which in turn follows from
\eqref{pqrs}.

\subsection{Real partial fraction decomposition}

Now assume $p$ and $q$ are real polynomials. The non-real roots of $q$ appear in conjugate pairs with equal multiplicities. Let $\{a_1,\dots,a_s\}$ be the real roots with multiplicities $m_1,\dots,m_s$, and let $\{b_\nu = c_\nu + i d_\nu,\ \bar{b}_\nu = c_\nu - i d_\nu\}$ ($d_\nu > 0$) be the complex conjugate pairs with multiplicities $\mu_\nu$ each ($\nu=1,\dots,\sigma$):

\begin{equation} \label{LagTk}\{t_0,\ldots,t_n\}=\{\underbrace{a_1}_\text{$m_1$},\ldots,\underbrace{a_s}_\text{$m_s$}, \underbrace{b_1,\bar b_1}_\text{$\mu_1$},\ldots, \underbrace{b_\sigma,\bar b_\sigma}_\text{$\mu_\sigma$}\}.\end{equation}

Then
\[
\sum_{\nu=1}^s m_\nu + 2 \sum_{\nu=1}^\sigma \mu_\nu = n+1,
\]
and
\[
q(x) = \prod_{\nu=1}^s (x - a_\nu)^{m_\nu} \prod_{\nu=1}^\sigma (x^2 + u_\nu x + v_\nu)^{\mu_\nu},
\]
where $u_\nu = -2c_\nu$, $v_\nu = c_\nu^2 + d_\nu^2$.

Define
\begin{equation}\label{psinu}
\psi_\nu(x) = \frac{q(x)}{(x - a_\nu)^{m_\nu}}, \qquad \nu=1,\dots,s,
\end{equation}
\begin{equation}\label{etanu}
\eta_\nu(x) = \frac{q(x)}{(x^2 + u_\nu x + v_\nu)^{\mu_\nu}}, \qquad \nu=1,\dots,\sigma,
\end{equation}
\[\hbox{where}\qquad q(x):=\prod_{\nu =1}^s{(x-a_\nu )}^{m_\nu} \prod_{\nu=1}^\sigma (x^2+u_\nu x+v_\nu)^{\mu_\nu}.\]

Now, by using the formulas \eqref{eq:LagTaylor}  and \eqref{L-N} where the knots are grouped as in \eqref{LagTk}, we get
$${\mathcal P}_{f,t_0,\ldots,t_n}=S_1+S_2,$$
where
$$S_1(x)=\sum_{\nu =1}^s \psi_\nu (x)\sum_{k=0}^{m_\nu -1}{1\over k!}{\left(f\over \psi_\nu \right)}^{(k)}(a_\nu) (x-a_\nu)^k,
$$ and
$$S_2(x)=\sum_{\nu =1}^\sigma \eta_\nu(x)\sum_{k=0}^{\mu_\nu -1}(x^2+u_\nu x+v_\nu)^k[\underbrace{b_\nu, \bar b_\nu,\ldots,b_\nu,\bar b_\nu}_\text{$2k$},b_i]{f\over {\eta_\nu}}
$$
$$+\sum_{\nu=1}^\sigma \eta_\nu(x)\sum_{k=0}^{\mu_\nu-1}(x^2+u_\nu x+v_\nu)^k(x-b_\nu)[\underbrace{b_\nu,\bar b_\nu,\ldots,b_\nu,\bar b_\nu}_\text{$2k+2$}]{f\over {\eta_\nu}}
$$
$$=\sum_{\nu=1}^\sigma \eta_\nu(x)\sum_{k=0}^{\mu_\nu-1}(M_{\nu k}x+N_{\nu k})(x^2+u_\nu x+v_\nu)^k,
$$
where
$$M_{\nu k}x+N_{\nu k}=[\underbrace{b_\nu,\bar b_\nu,\ldots,b_\nu,\bar b_\nu}_\text{$2k$},b_\nu]{f\over {\eta_\nu}}+(x-b_\nu)[\underbrace{b_\nu,\bar b_\nu,\ldots,b_\nu,\bar b_\nu}_\text{$2k+2$}]{f\over {\eta_\nu}}.$$
By equating here the coefficients of $x$ and free terms, we get
\begin{equation}\label{Mij} M_{\nu k}=[\underbrace{b_\nu,\bar b_\nu,\ldots,b_\nu,\bar b_\nu}_\text{$2k+2$}]{f\over {\eta_\nu}},\end{equation}
$$N_{\nu k}=[\underbrace{b_\nu,\bar b_\nu,\ldots,b_\nu,\bar b_\nu}_\text{$2k$},b_\nu]{f\over {\eta_\nu}}-b_\nu[\underbrace{b_\nu,\bar b_\nu,\ldots,b_\nu,\bar b_\nu}_\text{$2k+2$}]{f\over {\eta_\nu}}$$
$$=[\underbrace{b_\nu,\bar b_\nu,\ldots,b_\nu,\bar b_\nu}_\text{$2k+2$}]\left\{(t-\bar b_\nu){f\over {\eta_\nu}}-b_\nu{f\over {\eta_\nu}}\right\}.$$
Above, in the last equality, we used the relation
$$[x_0,\ldots,x_n]\{(t-x_n)f(t)\}= [x_0,\ldots,x_{n-1}]f.$$
Therefore we have
\begin{equation}\label{Nij} N_{\nu k}=[\underbrace{b_\nu,\bar b_\nu,\ldots,b_\nu,\bar b_\nu}_\text{$2k+2$}] \left\{(t-2c_\nu){f\over {\eta_\nu}}\right\}.\end{equation}
Thus we get the following formula:
$${\mathcal P}_{f,t_0,\ldots,t_n}=\sum_{\nu=1}^s \psi_\nu(x)\sum_{k=0}^{m_k-1}C_{\nu k} (x-a_\nu)^k+\sum_{\nu=1}^\sigma \eta_\nu(x)\sum_{k=0}^{\mu_\nu-1}(M_{\nu k}x+N_{\nu k})(x^2+u_\nu x+v_\nu)^k,$$
where $C_{\nu k}={1\over k!}{\left(f\over \phi_\nu\right)}^{(k)}(a_\nu)$, while  the numbers $M_{\nu k}$ and $N_{\nu k}$ are given in \eqref{Mij} and \eqref{Nij}, respectively.
The last two numbers are real, in view of the relation
$$ \xi=[x_0,\ldots,x_n]f \implies \bar\xi=[\bar x_0,\ldots, \bar x_n]\bar f.$$

Now let $f=r\in\pi_n.$ Then we have that ${\mathcal P}_{f,t_0,\ldots,t_n}=r.$

Therefore the above formula holds for any polynomial $r\in\pi_n$ in the following form:
\begin{equation}\label{rs1s2}r(x)=\sum_{\nu=1}^s \psi_\nu(x)\sum_{k=0}^{m_\nu-1}E_{\nu k} (x-a_\nu)^k+\sum_{\nu=1}^\sigma \eta_\nu(x)\sum_{k=0}^{\mu_\nu-1}(M_{\nu k}x+N_{\nu k})(x^2+u_\nu x+v_\nu)^k,\end{equation}
where $$E_{\nu k}={1\over k!}{\left(r\over \psi_\nu\right)}^{(k)}(a_\nu), \quad M_{\nu k}=[\underbrace{b_\nu,\bar b_\nu,\ldots,b_\nu,\bar b_\nu}_\text{$2k+2$}]{r\over {\eta_\nu}},$$ and 
$$ N_{\nu k}=[\underbrace{b_\nu,\bar b_\nu,\ldots,b_\nu,\bar b_\nu}_\text{$2k+2$}] \left\{(t-2 c_\nu){r\over {\eta_\nu}}\right\}.$$
Finally, by using \eqref{pqrsb} and dividing the both sides of \eqref{rs1s2} by $q(x)$, for ${r(x)\over q(x)},$ we get the following explicit formula for the  decomposition of real rational functions into real partial fractions:
\begin{equation}\label{irakan}{p(x)\over q(x)}=s(x)+\sum_{\nu=1}^s \sum_{k=0}^{m_\nu-1}{E_{\nu k}\over (x-a_\nu)^{n_\nu-k}}+\sum_{\nu=1}^\sigma \sum_{k=0}^{\mu_\nu-1}{(M_{\nu k}x+N_{\nu k})\over (x^2+u_\nu x+v_\nu)^{\mu_\nu-k}},\end{equation}
where
$$E_{\nu k}={1\over k!}{\left(p\over \psi_\nu\right)}^{(k)}(a_\nu), \quad M_{\nu k}=[\underbrace{b_\nu,\bar b_\nu,\ldots,b_\nu,\bar b_\nu}_\text{$2k+2$}]{p\over {\eta_\nu}},$$  
$$ N_{\nu k}=[\underbrace{b_\nu,\bar b_\nu,\ldots,b_\nu,\bar b_\nu}_\text{$2k+2$}] \left\{(t-2c_\nu){p\over {\eta_\nu}}\right\}.$$
The polynomials $\psi_\nu(x)$ and $\eta_\nu(x)$ here are given in \eqref{psinu} and \eqref{etanu}, respectively.

In the final expressions of $E_{\nu k}$, $M_{\nu k},$ and $N_{\nu k}$, compared to the previous ones, we have replaced the polynomial $r$ with $p$. The validity of this replacement follows from the relation ${\mathcal P}_{p;t_0,\ldots,t_n}=r,$ which itself follows from
\eqref{pqrs}.


\end{document}